\newcommand\nthalias[1]{\AddToHook{env/#1/begin}{\crefalias{lemma}{#1}}}
\crefname{section}{Section}{Sections}
\crefname{subsection}{\S}{\S\S}
\crefname{subsubsection}{\S}{\S\S}
\theoremstyle{plain}
\newtheorem{lemma}{Lemma}[section]
\newtheorem{corollary}[lemma]{Corollary}
\newtheorem{theorem}[lemma]{Theorem}
\theoremstyle{plain}
\theoremstyle{plain}
\newtheorem{definition}[lemma]{Definition}
\newtheorem{examples}[lemma]{Examples}
\newtheorem{remark}[lemma]{Remark}
\crefname{definition}{definition}{definitions}
\crefname{example}{example}{examples}
\crefname{examples}{example}{examples}
\crefname{remark}{remark}{remarks}
\crefname{remarks}{remark}{remarks}
\crefname{convention}{convention}{conventions}
\crefname{notation}{notation}{notations}
\crefname{table}{table}{tables}
\crefname{lemma}{lemma}{lemmas}
\crefname{proposition}{proposition}{propositions}
\crefname{corollary}{corollary}{corollaries}
\crefname{theorem}{theorem}{theorems}
\crefname{enumi}{}{}
\crefname{assumption}{assumption}{Assumptions}
\crefname{construction}{construction}{Constructions}
\crefname{equation}{}{}
\numberwithin{equation}{section}
\theoremstyle{nonumberplain}
\newtheorem{proof}{Proof}
\newcommand\pf[1]{\newtheorem{#1}{Proof of \Cref{#1}}}
\newcommand\bC{{\mathbb C}}
\newcommand\bG{{\mathbb G}}
\newcommand\bZ{{\mathbb Z}}
\newcommand\cL{{\mathcal L}}
\newcommand\cP{{\mathcal P}}
\DeclareMathOperator{\id}{id}
\DeclareMathOperator{\Irr}{Irr}
\newcommand{\cat}[1]{\textsc{#1}}
\newcommand{\qedhere}{\mbox{}\hfill\ensuremath{\blacksquare}}
\title{Fixed-point permanence under actions by finite quantum groups}
\author{Alexandru Chirvasitu}
\begin{document}

\date{}

\newcommand{\Addresses}{{
  \bigskip
  \footnotesize

  \textsc{Department of Mathematics, University at Buffalo}
  \par\nopagebreak
  \textsc{Buffalo, NY 14260-2900, USA}  
  \par\nopagebreak
  \textit{E-mail address}: \texttt{achirvas@buffalo.edu}


}}

\maketitle

\begin{abstract}
  Given an action by a finite quantum group $\mathbb{G}$ on a von Neumann algebra $M$, we prove that a number of familiar $W^*$ properties are equivalent for $M$ and the fixed-point algebra $M^{\mathbb{G}}$ (i.e. hold or not simultaneously for the two algebras); these include being hyperfinite, atomic, diffuse and of type $I$, $II$ or $III$. Moreover, in all cases the canonical central projections of $M$ and $M^{\mathbb{G}}$ cutting out the summand with the respective property coincide. The result generalizes its classical-$\mathbb{G}$ analogue due to Jones-Takesaki. 
\end{abstract}

\noindent \emph{Key words: compact quantum group; expectation; finite index; von Neumann algebra; type; diffuse; hyperfinite; spectral projection}

\vspace{.5cm}

\noindent{MSC 2020: 20G42; 46L67; 46L10; 16W22; 46L52
  
  
}


\section*{Introduction}

Recall \cite[Proposition 2.1.4]{jones_fin-act-hyp} that for any action of a finite group $\bG$ on a von Neumann algebra $M$, the fixed-point subalgebra $M^{\bG}$ is or is not of type I, II, III or finite, along with $M$ (i.e. these properties are equivalent for the two von Neumann algebras). The present note is aimed primarily at extending that result to its analogue for actions of finite {\it quantum} groups $\bG$ in the sense of \cite[Definition 1.1.1]{NeTu13}: unital $C^*$-algebras $A:=C(\bG)$ equipped with a unital $C^*$ morphism
\begin{equation*}
  A
  \xrightarrow{\quad}
  A\otimes A
  \quad
  \left(\text{minimal $C^*$ tensor product \cite[Definition IV.4.8]{tak1}}\right),
\end{equation*}
coassociative in the obvious sense and such that
\begin{equation*}
  \overline{(A\otimes \bC)\cdot \Delta A}^{\|\cdot\|}
  =
  A\otimes A
  =
  \overline{(\bC\otimes A)\cdot \Delta A}^{\|\cdot\|}
\end{equation*}
(products of sets meaning spans of products). $\bG$ is finite when $C(\bG)$ is finite-dimensional, and actions on von Neumann algebras are as in \cite[Definition 2.2]{wan-erg}: writing $L^{\infty}(\bG)$ for the von Neumann closure of $C(\bG)$ in the GNS representation attached to the \emph{Haar state} \cite[Theorem 1.2.1]{NeTu13} $(A,h)$, an action of $\bG$ on $M$ is a $W^*$ morphism
\begin{equation*}
  M
  \xrightarrow{\quad\rho\quad}
  M\otimes L^{\infty}(\bG)
  \quad
  \left(\text{\emph{spatial} $W^*$ tensor product \cite[Definition IV.5.1]{tak1}}\right)
\end{equation*}
such that
\begin{itemize}[wide]
\item $(\id\otimes\Delta)\circ \rho = (\rho\otimes\id)\circ \rho$;

\item and
  \begin{equation*}
    \overline{(\bC\otimes L^{\infty}(\bG))\cdot \rho M}^{W^*}
    =
    M\otimes L^{\infty}(\bG).
  \end{equation*}
\end{itemize}

That in place, the quantum analogue of the aforementioned \cite[Proposition 2.1.4]{jones_fin-act-hyp} reads:

\begin{theorem}\label{th:finqg-sametype}
  Let $M$ be a $W^*$-algebra equipped with an action by a finite quantum group $\bG$. $M$ and the fixed-point subalgebra $M^{\bG}$ then have the same canonical finite hyperfinite, atomic, diffuse and type-$\tau$ central projections for $\tau\in\{I,\ II,\ III\}$. 
\end{theorem}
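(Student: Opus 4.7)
My plan is to pivot on the canonical faithful normal conditional expectation
\[
E:=(\id\otimes h)\circ\rho:M\to M^{\bG}
\]
attached to the Haar state $h$ on $C(\bG)$, which is faithful and normal because $\bG$ is finite. The overall strategy is two-fold: first, show that each canonical central projection of $M$ is automatically $\bG$-invariant and hence sits in $Z(M^{\bG})$ as well; second, transfer the defining properties across the inclusion $M^{\bG}\subseteq M$, which I would argue has finite Jones/Watatani index.

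For the first part, for each property $P$ on the list let $z_P\in Z(M)$ denote the canonical largest central projection with $Mz_P$ of type $P$. Because $z_P$ is intrinsically defined by the $W^*$-algebra $M$, it is preserved by every $W^*$-automorphism; in the coaction language, the image $\rho(z_P)\in\rho(M)$ is the $P$-projection of the $W^*$-algebra $\rho(M)\cong M$. Combining this naturality with the stability of the properties on the list under tensoring with the finite-dimensional algebra $L^\infty(\bG)$, and with the non-degeneracy identity $\overline{(\bC\otimes L^\infty(\bG))\cdot\rho M}^{W^*}=M\otimes L^\infty(\bG)$ that places $\rho(Z(M))$ inside $Z(M)\otimes L^\infty(\bG)$, should force $\rho(z_P)=z_P\otimes 1$. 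This gives $z_P\in M^{\bG}\cap Z(M)\subseteq Z(M^{\bG})$ and makes it meaningful to compare $z_P$ with $z_P^{M^{\bG}}$.

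Once $z_P$ is central in both $M$ and $M^{\bG}$, I would cut down by $z_P$ and by $1-z_P$ and reduce the theorem to the global statement: for each $P$ in the list, $M$ has property $P$ if and only if $M^{\bG}$ does. To prove this, I would invoke finite index: a Pimsner-Popa quasi-basis for $E$ can be produced from the matrix coefficients of the regular representation of $\bG$, or equivalently one identifies the basic construction $\langle M,e_{M^{\bG}}\rangle$ with the quantum crossed product $M\rtimes\hat{\bG}$, itself a finite-dimensional amplification of $M$. Standard transfer results for finite-index $W^*$-inclusions then move the properties both ways: type and finiteness go back to Jones-Takesaki, while hyperfiniteness, atomicity and diffusity can be handled via the Morita-style symmetry between $M^{\bG}$ and $M\rtimes\hat{\bG}$ (atomicity and diffusity actually admit direct arguments from the type-$I_n$ decomposition and the fact that $\bG$ preserves atoms).

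The hard part will be the last step: carefully verifying that every property on the list really does pass in both directions across a finite-index $W^*$-inclusion, and pinning down clean references. Once that machinery is available, plugging the $\bG$-invariant canonical projections of $M$ back into the equivalence on each summand forces the two families of central projections to coincide, giving the equality $z_P=z_P^{M^{\bG}}$ claimed in the theorem.
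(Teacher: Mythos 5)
Your overall frame (pivot on the canonical expectation $E=(\id\otimes h)\circ\rho$ onto $M^{\bG}$ and exploit finite index) is the right one, and it is the paper's, but both halves of your plan have genuine gaps. The first step does not work as described: $\rho$ is a morphism $M\to M\otimes L^\infty(\bG)$, not an automorphism of $M$, so ``naturality of $z_P$ under automorphisms'' does not apply, and the asserted inclusion $\rho(Z(M))\subseteq Z(M)\otimes L^\infty(\bG)$ is simply false in general --- it is not a consequence of the non-degeneracy condition. For $\bG=\hat{G}$ with $G$ finite non-abelian, an action is a $G$-grading and the homogeneous components of a central element need not be central (e.g.\ the translation action on $\bC[G]$: a conjugacy-class sum is central but its components $\bC g$ are not). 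Even granting that $\rho(z_P)$ is the $\cP$-projection of the subalgebra $\rho(M)\cong M$, comparing it with the $\cP$-projection $z_P\otimes 1$ of the ambient algebra $M\otimes L^\infty(\bG)$ is precisely an instance of the statement you are trying to prove, so this route is circular. The invariance $\rho(z_P)=z_P\otimes 1$ is true, but in the paper it falls out \emph{a posteriori}: the transfer machinery shows that the canonical central projections of $N=M^{\bG}$ are already central in $M$ and realize the canonical decomposition of $M$ (this is the central-orthogonality argument in part (IV) of the proof of \Cref{th:ppcomplid}); it is not an input one can establish up front by soft naturality.

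The second step defers essentially all of the content to ``standard transfer results,'' which you yourself flag as the hard part; in the paper these are exactly \Cref{th:ppcomplid} and \Cref{cor:typres}, proved via the algebra $\cL_N(X_E)$, a lifting/descending argument for completely ideal properties and their strong negations, and the equivalence-of-projections argument that forces the $N$-central projections to be $M$-central. Note also that descent along expectations is the easy direction; it is the lift from $M^{\bG}$ to $M$, and the matching of the central projections, that require the index hypothesis. Finally, for the finite-index verification itself the paper does not produce a quasi-basis (which would amount to \emph{strong} PP-ness and to $M$ being finitely generated over $M^{\bG}$, not obvious for non-free actions, and your identification of the basic construction with a ``finite-dimensional amplification of $M$'' is off --- the relevant algebra is $M\rtimes\bG$). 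Instead it proves the weak Pimsner--Popa inequality \Cref{eq:ppexp} directly: De Commer--Yamashita's local estimate \Cref{eq:dcyeea} bounds $E(x^*x)$ below by each $\lambda_\alpha E_\alpha(x)^*E_\alpha(x)$, and since $|\Irr\bG|<\infty$ a Cauchy--Schwarz argument over the spectral decomposition $x=\sum_\alpha E_\alpha(x)$ yields $E(x^*x)\ge\lambda x^*x$ with $\lambda=|\Irr\bG|^{-2}\min_\alpha\lambda_\alpha$. That single inequality, fed into \Cref{cor:typres} (and \cite[Proposition 2.1]{fk_fin-ind} for finiteness), finishes the proof.
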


References for the various properties mentioned in the statement are provided in \hyperref[exs:compidealprops]{Examples~\ref*{exs:compidealprops}}, and we refer to \cite[\S V.1]{tak1} for type-decomposition theory. 

\Cref{th:finqg-sametype} follows fairly easily from work done in \cite{dcy} on \emph{finite-index expectations} attached to quantum actions. A brief reminder on the various notions of finiteness introduced in \cite[D\'efinitions 3.6]{bdh} for {\it conditional expectations} $M\xrightarrow{E}N$ onto $W^*$-subalgebras (i.e. \cite[\S II.6.10]{blk} $\sigma$-weakly continuous norm-1 idempotents) will help provide some context.
\begin{itemize}[wide]
\item $E$ is {\it of finite index} if $E-\lambda$ is {\it completely positive} \cite[\S II.6.9]{blk} for some $\lambda>0$:
  \begin{equation}\label{eq:ppexp}
    E(x^*x)\ge \lambda x^*x
    ,\quad \forall x\in M\otimes M_n    
    \quad\text{for some fixed $\lambda>0$ and arbitrary $n\in \bZ_{>0}$};
  \end{equation}
  Equivalently \cite[Theorem 1]{fk_fin-ind}, it is enough to require \Cref{eq:ppexp} for $n=1$ alone, i.e. the positivity of $E-{\lambda}$ entails its complete positivity. In the language of \cite[D\'efinitions 3.6]{bdh}, being of finite index is equivalent to being {\it weakly} of finite index. 
  
\item $E$ is {\it strongly of finite index} if furthermore $M$, regarded as an {\it $M$-$N$-correspondence} \cite[D\'efinition 2.1(i)]{bdh} via $E$, has a finite {\it orthonormal basis} (\cite[D\'efinitions 1.6]{bdh}, \cite[\S 1.1]{dh-1} and \cite[Theorem 3.12]{pasch_wast-mod}; {\it (complete) quasi-orthonormal system} in \cite[Definitions 4.8 and 4.10 and Theorem 4.11]{zbMATH01544901}).
\end{itemize}
We will also substitute {\it Pimsner-Popa} or {\it PP} for the phrase {\it of finite index}, following \cite[\S 2.1]{pw_vna-act}.

The proof of \Cref{th:finqg-sametype} follows a small detour meant to abstract away from the specifics of type decompositions and the like, proving that PP expectations preserve certain classes of von-Neumann-algebra properties of which being of type $\tau\in\{I,\ II,\ III\}$, or diffuse, or hyperfinite are examples.

\begin{definition}\label{def:propp}
  Consider a $W^*$-algebra property $\cP$, transferred to projections $p\in N$ by saying that $p$ has (or is) $\cP$ if $pNp$ does (or is).
  \begin{enumerate}[(1),wide]
  \item $\cP$ is {\it ideal} if for every von Neumann algebra $N$
    \begin{equation}\label{eq:phasp}
      \cat{Proj}_{\cP}(N):=
      \left\{\text{$\cP$-projections }p\in N\right\}
      \subseteq
      \cat{Proj}(N):=
      \left\{\text{all $N$-projections}\right\}
    \end{equation}
    is a (lattice-theoretic \cite[Definition post Example 2.15]{bly_latt}) {\it ideal}:
    \begin{itemize}
    \item closed under taking suprema;
    \item and a {\it down-set} \cite[Definition post Example 1.13]{bly_latt}:
      \begin{equation*}
        p\text{ has }\cP\text{ and }q\le p
        \xRightarrow{\quad}
        q\text{ has }\cP.
      \end{equation*}
    \end{itemize}
  \item $\cP$ is {\it completely} ideal if it is ideal and \Cref{eq:phasp} is furthermore {\it complete}, i.e. closed under {\it arbitrary} suprema.

  \item $\cP$ is instead (only) {\it $Z$-completely (or centrally completely)} ideal if it is ideal and closed under {\it centrally orthogonal} \cite[pre Theorem V.1.8]{tak1} sums (i.e. sums of projections whose respective {\it central supports} \cite[pre Corollary IV.5.6]{tak1} are orthogonal).
  \end{enumerate}
\end{definition}

\Cref{th:ppcomplid} below shows that completely ideal properties travel well along PP expectations, \Cref{cor:typres} specializes this to the properties listed in \Cref{th:finqg-sametype}, and the latter follows. A version of \Cref{th:ppcomplid} applicable to \emph{strongly} PP expectations and $Z$-completely ideal properties is proven in \Cref{th:strongpp}, and might be of some independent interest given the context.

\subsection*{Acknowledgments}

I am grateful for instructive comments from K. De Commer, A. Freslon, P. So{\l}tan, M. Wasilewski and M. Yamashita.


\section{Permanence under sufficiently non-degenerate expectations}\label{se:ppexp}


Recall the Introduction's finiteness expectation conditions. 

\begin{remark}\label{re:fgenough}
  One simple observation that it will be convenient to take for granted now and then is that being strongly PP is equivalent to being PP and $M$ being finitely generated as a right $N$-module in the purely algebraic sense that
  \begin{equation*}
    M=x_1N+\cdots+x_sN
    \quad\text{for some}\quad
    x_i\in M. 
  \end{equation*}
  One direction is obvious; as for the other ($\Leftarrow$), note that in general, an $N$-$W^*$-module $X$ finitely generated as plain $N$-module has a finite orthonormal basis. One can induct on the number $s$ of generators:
  \begin{itemize}[wide]
  \item $p_1:=\braket{x_1\mid x_1}\in N$ can be assumed a projection after polar-decomposing the original $x_1$ \cite[Proposition 3.11]{pasch_wast-mod};
    
  \item this then gives a projection
    \begin{equation*}
      X\ni x
      \xmapsto{\quad Q:=\ket{x_1}\bra{x_1}\quad}
      x_1\braket{x_1\mid x}
      \in X
    \end{equation*}
    in $\cL_N(X)$;
  \item and the summand $(1-Q)X\le X$ complementary to $QX$ is generated by the (images of the) $s-1$ elements $x_j$, $2\le j\le s$, propelling the induction. 
  \end{itemize}
\end{remark}

A few simple preliminary remarks, useful enough to set out explicitly, but left as an exercise (in abstracting away from, say, \cite[Theorem V.1.19]{tak1}): 

\begin{lemma}\label{le:pp'}
  Let $N$ be a von Neumann algebra and $\cP$ a completely ideal $W^*$-algebra property.
  \begin{enumerate}[(1),wide]
  \item\label{item:le:pp':maxiscent} There is a largest $\cP$-projection $z_{N,\cP}\in N$, automatically central.

  \item\label{item:le:pp':sneg} The {\it strong negation} $\cP'$ of $\cP$ defined by
    \begin{equation*}
      N\text{ is }\cP'
      \xLeftrightarrow{\quad}
      \text{it has no non-zero $\cP$-projections}
    \end{equation*}
    is also completely ideal, and $z_{N,\cP'}=1-z_{N,\cP}$.  

  \item\label{item:le:pp':z} If $\cP$ is only $Z$-completely ideal, then there still is a largest {\it central} $\cP$-projection $z=z_{N,\cP}\in N$, and $1-z$ is the largest $\cP_Z'$-projection for the {\it $Z$-strong negation}
    \begin{equation*}
      N\text{ is }\cP_Z'
      \xLeftrightarrow{\quad}
      \text{it has no non-zero central $\cP$-projections}
    \end{equation*}
    of $\cP$.  \qedhere
  \end{enumerate}  
\end{lemma}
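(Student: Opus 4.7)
My plan is to reduce each part to an explicit description of the lattice of $\cP$- (or $\cP'$-) projections as a principal order ideal in $\cat{Proj}(N)$, handling \Cref{item:le:pp':z} separately via Zorn's lemma because the $Z$-completely ideal hypothesis only controls centrally orthogonal suprema.

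For \Cref{item:le:pp':maxiscent}, let $z_{N,\cP}$ denote the supremum of $\cat{Proj}_{\cP}(N)$ in the projection lattice of $N$; complete idealness makes it a $\cP$-projection, manifestly the largest. Centrality follows because $\cP$ is invariant under $*$-automorphisms (any such $\alpha$ induces a $*$-isomorphism $z_{N,\cP}Nz_{N,\cP}\cong \alpha(z_{N,\cP})N\alpha(z_{N,\cP})$ and thus transports the property $\cP$), so for every unitary $u\in N$ the conjugate $uz_{N,\cP}u^*$ is a $\cP$-projection and hence $\le z_{N,\cP}$; the same bound applied to $u^*$ yields the reverse inequality, and therefore $uz_{N,\cP}u^*=z_{N,\cP}$ for all unitaries $u$.

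For \Cref{item:le:pp':sneg}, the key observation is that a $\cP$-projection of $pNp$ is precisely a $\cP$-projection of $N$ dominated by $p$; therefore $p\in\cat{Proj}_{\cP'}(N)$ iff no non-zero $\cP$-projection lies below $p$. With $z:=z_{N,\cP}$ bounding every $\cP$-projection, this reduces to $p\wedge z=0$, equivalently (by centrality of $z$) $p\le 1-z$. Thus $\cat{Proj}_{\cP'}(N)$ is the principal down-set generated by $1-z$, a complete ideal whose top element is $1-z_{N,\cP}$.

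For \Cref{item:le:pp':z}, I would use Zorn's lemma on the poset of families of pairwise centrally orthogonal central $\cP$-projections (ordered by inclusion) to produce a maximal such family $\{p_i\}$. Since central projections equal their own central supports, centrally orthogonal means pairwise orthogonal, and the $Z$-completely ideal hypothesis then ensures $z:=\sup_i p_i$ is itself a central $\cP$-projection. For any central $\cP$-projection $p$, the projection $p(1-z)\le p$ is a central $\cP$-projection (by the down-set axiom) centrally orthogonal to each $p_i$, so maximality forces $p(1-z)=0$ and $p\le z$. To identify $1-z$ as the largest $\cP_Z'$-projection: because $1-z$ is central, the central projections of $(1-z)N(1-z)=(1-z)N$ coincide with the central projections of $N$ dominated by $1-z$, so any non-zero central $\cP$-projection of this corner would contradict the maximality of $z$; conversely, for a general $\cP_Z'$-projection $p$, the projection $pz=zp$ is central in $pNp$ and is a $\cP$-projection of $N$ (being $\le z$), hence a central $\cP$-projection of $pNp$, and the $\cP_Z'$ hypothesis forces $pz=0$, i.e.\ $p\le 1-z$. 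The main obstacle throughout is juggling the two distinct notions of centrality — central in $N$ versus central in $pNp$ — and the argument succeeds precisely because the distinguished projection $1-z$ is itself central in $N$.
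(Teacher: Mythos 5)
Your proof is correct; the paper states this lemma without proof (it is explicitly ``left as an exercise,'' abstracted from the type-decomposition argument of \cite[Theorem V.1.19]{tak1}), and your argument is exactly that standard one: the supremum of $\cat{Proj}_{\cP}(N)$ plus unitary-conjugation invariance for centrality in \Cref{item:le:pp':maxiscent}, the identification of $\cP'$-projections with those $p$ satisfying $p\wedge z_{N,\cP}=0$ in \Cref{item:le:pp':sneg}, and a Zorn-maximal centrally orthogonal family in \Cref{item:le:pp':z}. The only implicit ingredient worth flagging is that a ``$W^*$-algebra property'' must be isomorphism-invariant for the step $uz_{N,\cP}u^*\in\cat{Proj}_{\cP}(N)$ to go through, but the paper relies on this tacitly as well (e.g.\ in step \Cref{item:pr:ppcomplid:IV} of the proof of \Cref{th:ppcomplid}).
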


\begin{examples}\label{exs:compidealprops}
  \begin{enumerate}[(1), wide]
  \item\label{item:exs:compidealprops:tau} The property of being type $\tau\in\{I,\ II,\ III\}$ is completely ideal, as follows from downward closure \cite[Proposition 11 and Corollary 4 to Proposition 13 of \S I.6.8; Proposition 4 of \S I.8.3]{dixw} and the central decomposition \cite[Theorem V.1.19]{tak1} $1=z_I+z_{II}+z_{III}$.

  \item\label{item:exs:compidealprops:atomic} Recall (\cite[\S IV.2.2.1]{blk}, \cite[\S 10.21]{strat}) that a $W^*$-algebra is {\it atomic} if every non-zero projection dominates a minimal non-zero projection. Atomicity is completely ideal, with closure under suprema following from the general discussion on the relative position of two projections in \cite[discussion preceding Theorem V.1.41]{tak1}.

    The strong negation of atomicity is {\it diffuseness} (\cite[\S 29.2]{strat}, \cite[Theorem III.4.8.8]{blk}); it too is completely ideal, either as a direct simple exercise or by \Cref{le:pp'}\Cref{item:le:pp':sneg}.

  \item\label{item:exs:compidealprops:afd} Another completely ideal property is that of being {\it AFD} (short for {\it approximately finite-dimensional} \cite[Definition XIV.2.3]{tak3} and an alternative for {\it hyperfinite}, following \cite[p.175]{zbMATH03532021}). It means that arbitrary finite subsets of the von Neumann algebra in question ($N$, say) are arbitrarily approximable in the {\it $\sigma$-strong$^*$ topology} \cite[\S I.3.1.6]{blk} by finite-dimensional von Neumann subalgebras, and is equivalent (\cite[Theorem 2 and Corollary 5]{ell_afd-2}, \cite[\S 6.4]{zbMATH03940006}) to the {\it injectivity} \cite[\S IV.2.2]{blk} of $N$. 

    As to the fact that AFD-ness is completely ideal:
    \begin{itemize}[wide]
    \item The characterization \cite[Proposition IV.2.1.4]{blk} of injectivity in terms of expectations makes closure under projection cutting obvious.
    \item For AFD projections $p,q\in N$ the supremum $p\vee q$ is again AFD by the already-mentioned relative-position analysis of \cite[discussion preceding Theorem V.1.41]{tak1}.

    \item And closure under filtered inclusions of AFD subalgebras is self-evident from the very definition of approximate finite-dimensionality, hence closure under {\it arbitrary} suprema. 
    \end{itemize}
    
  \item\label{item:exs:compidealprops:fin} One example of a commonly-discussed property that of course is {\it not} completely ideal is finiteness: for infinite-dimensional $H$, the identity of $\cL(H)$ is the supremum of finite-rank (hence finite) projections. Finiteness {\it is} ideal though \cite[Theorem V.1.37]{tak1}, and in fact \cite[Lemma V.1.18]{tak1} $Z$-completely so.

  \item\label{item:exs:compidealprops:propinfin} {\it Proper infinitude} \cite[Definition V.1.15]{tak1}, on the other hand, is not ideal (let alone completely or $Z$-completely so) because it is not closed under projection-cutting: the same example $N:=\cL(H)$ above is properly infinite (i.e. its non-zero projections are all infinite), but $pNp$ is finite for finite-rank $p$.

    Note in passing that proper infinitude is precisely the $Z$-strong negation of finiteness in the sense of \Cref{le:pp'}\Cref{item:le:pp':z}.
  \end{enumerate}
\end{examples}

\begin{remark}
  \Cref{le:pp'}\Cref{item:le:pp':z} is intended precisely as stated: for $Z$-completely ideal $\cP$ $z_{N,\cP}$ is by definition the largest {\it central} $\cP$-projection in $N$, while its complement $z_{N,\cP_Z'}=1-z_{N,\cP}$ is the largest $\cP_Z'$-projection period, central or not.

  For illustration, consider Examples \ref{exs:compidealprops}\ref{item:exs:compidealprops:fin} and \ref{exs:compidealprops}\ref{item:exs:compidealprops:propinfin} above: in the central decomposition
  \begin{equation*}
    N\ni 1=z+z':=z_{\cat{finite}}+z_{\cat{properly infinite}}
  \end{equation*}
  of \cite[Theorem V.1.19]{tak1} the second central summand $z'$ dominates every properly infinite projection in $N$.
\end{remark}

\begin{theorem}\label{th:ppcomplid}
  Let $M\xrightarrow{E}N$ be a PP conditional expectation and $\cP$ a completely ideal property that descends along such expectations. The maximal central projections $z_{N,\cP}$ and $z_{M,\cP}$ then coincide. 
\end{theorem}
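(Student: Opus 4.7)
Writing $z_M := z_{M,\cP}$ and $z_N := z_{N,\cP}$, both viewed as projections in $M$, I would prove the theorem by establishing the two inequalities $z_N \le z_M$ and $z_M \le z_N$. For $z_N \le z_M$ my aim is to show $z_N$ is itself a $\cP$-projection of $M$, whence the maximality of $z_M$ (\Cref{le:pp'}\Cref{item:le:pp':maxiscent}) gives the inequality. The compression $z_N E z_N : z_N M z_N \to z_N N z_N$ is a conditional expectation inheriting the Pimsner-Popa constant of $E$, with codomain having $\cP$ by the definition of $z_N$. The step therefore reduces to a converse-to-descent assertion: a PP expectation transports $\cP$ from target to source. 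I read the section's preamble (``completely ideal properties travel well along PP expectations'') as indicating that this ascent is either folded into the paper's usage of ``descent along PP'' or appears as a short auxiliary lemma coupling complete ideality with the Pimsner-Popa inequality.

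For $z_M \le z_N$ the plan is to prove $z_M \in N$, upon which descent of $\cP$ along the restricted PP $z_M M z_M \to z_M N z_M$ makes $z_M$ a central $\cP$-projection of $N$, forcing the inequality. Bilinearity of $E$ over $N$ places $E(z_M) \in Z(N)$ (using $z_M \in Z(M)$), and the Pimsner-Popa bound $E(z_M) \ge \lambda z_M$ makes the support $q := \mathrm{supp}_N E(z_M) \in Z(N)$ dominate $z_M$ in $M$. Centrality of $z_M$ in $M$ decomposes $q M q = M z_M \oplus (q - z_M) M (q - z_M)$ as an orthogonal direct sum (cross-terms vanish by centrality), with the first summand having $\cP$ and the second sitting inside $(1 - z_M) M (1 - z_M)$, which admits no non-zero $\cP$-projection by \Cref{le:pp'}\Cref{item:le:pp':sneg}. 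Descent along the restricted PP $q M q \to q N q$ should then force $q - z_M = 0$, giving $z_M = q \in N$.

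The hardest points will be executing the converse-to-descent step and, correspondingly, rigorously forcing $q - z_M = 0$ in the second direction: both require complete ideality to couple with the Pimsner-Popa inequality through a Morita-like correspondence between $\cP$-projections on the two sides, rather than descent alone.
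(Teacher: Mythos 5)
Your overall frame---prove $z_{N,\cP}\le z_{M,\cP}$ and $z_{M,\cP}\le z_{N,\cP}$, with the first reduced to showing that $\cP$ \emph{ascends} from $z_{N,\cP}N$ to $z_{N,\cP}Mz_{N,\cP}$ along the compressed PP expectation---is sound, and it correctly isolates where the real work lies. But the proposal then defers exactly that work. The ``converse-to-descent'' step is not folded into the hypothesis (which posits only descent), and it is not a short formal consequence of complete ideality plus the Pimsner--Popa inequality; it is the main content of the theorem. The paper supplies it via the basic construction: PP-ness of $E$ yields a further PP expectation $\cL_N(X_E)\xrightarrow{E_1}M$ from the algebra of adjointable endomorphisms of the correspondence $X_E$ \cite[Th\'eor\`eme 3.5]{bdh}, and $\cL_N(X_E)\cong p\left(N\otimes\cL(\ell^2(S))\right)p$ for an orthonormal basis indexed by $S$, with the $p_s$ having join $1$ by non-degeneracy. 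Complete ideality is precisely what lets $\cP$ survive tensoring with a type-$I$ factor and corner-cutting, so $N$ has $\cP$ iff $\cL_N(X_E)$ does; combined with descent along $E_1$ this gives ascent from $N$ to $M$. Without some such zig-zag through a larger algebra your first inequality has no proof.

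The second direction as proposed also does not close. The decomposition $qMq=Mz_M\oplus(q-z_M)M(q-z_M)$ is correct, but ``descent along $qMq\to qNq$ forces $q-z_M=0$'' is unsupported: descent yields information only when the \emph{source} algebra has $\cP$, and $qMq$ has $\cP$ only on its $z_M$-corner. Note also that $z_M\in N$ cannot follow from PP-ness alone (for $M=\bC^2\supseteq N=\bC1$ with the averaging expectation, $Z(M)\not\subseteq N$), so any proof of it must again route through the $\cP/\cP'$ dichotomy rather than through the support projection of $E(z_M)$. The paper instead applies ascent of $\cP$ and of its strong negation $\cP'$ (completely ideal by \Cref{le:pp'}\Cref{item:le:pp':sneg}) to the compressions of $E$ by $z_{N,\cP}$ and $z_{N,\cP'}=1-z_{N,\cP}$, obtaining $1=z_{N,\cP}+z_{N,\cP'}$ as a decomposition of $M$ into a $\cP$ summand and a $\cP'$ summand, and then checks that these are centrally orthogonal in $M$ (they admit no equivalent nonzero subprojections), hence central and equal to the canonical decomposition. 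Once ascent is in hand, your second inequality drops out the same way: $z_{M,\cP}\,z_{N,\cP'}$ is a $\cP$-projection inside the $\cP'$ algebra $z_{N,\cP'}Mz_{N,\cP'}$, hence zero.
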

\begin{proof}
  Under the PP assumption we also have \cite[Th\'eor\`eme 3.5, Lemme 2.15 and \S 3.10]{bdh} a PP expectation $\cL_N(X_E)\xrightarrow{E_1}M$ from the von Neumann algebra \cite[Proposition 3.10]{pasch_wast-mod} of {\it adjointable} continuous $N$-module endomorphisms of the $M$-$N$-correspondence (hence also a right $N$-$W^*$-module) attached \cite[Proposition 2.8]{bdh} to the expectation $E$.

  Note furthermore that $X_E$ is {\it non-degenerate} \cite[\S 1.1]{dh-1} as an $N$-$W^*$-module: the ideal of $N$ generated by $\braket{x\mid y}$, $x,y\in X_E$ is (dense in) $N$. 
 
  \begin{enumerate}[(I),wide=0pt]
  \item\label{item:pr:ppcomplid:I}{\bf: $\cP$ or its absence are equivalent for $N$ and $\cL_N(X_E)$.} This is true of $N$ and $\cL_N(X)$ for {\it any} non-degenerate $N$-$W^*$-module $X$: by the general structure \cite[Theorem 3.12]{pasch_wast-mod} we have
    \begin{equation*}
      \cL_N(X)\cong p \left(N\otimes\cL(\ell^{2}(S))\right)p
      ,\quad
      S\text{ a set and }
      p\in N\otimes\cL(\ell^{2}(S))\text{ a projection}.
    \end{equation*}
    Indeed, if $(x_s)_{s\in S}$ is an orthonormal basis for $X$ (what \cite[Theorem 3.12]{pasch_wast-mod} provides) with corresponding projections $p_s:=\braket{x_s\mid x_s}\in N$, then
    \begin{equation}\label{eq:xopluss}
      X\cong \bigoplus_{s\in S} p_s N
      \quad\text{and}\quad
      \cL_N(X) = p \left(N\otimes\cL(\ell^{2}(S))\right)p
      \quad\text{with}\quad
      p:=\mathrm{diag}\left(p_s,\ s\in S\right).
    \end{equation}    
    The operations of projection-cutting (i.e. $\bullet\mapsto p\bullet p$) and tensoring with a type-$I$ factor both preserve completely ideal properties, transporting $\cP$ from $N$ to $\cL_N(X)$. Conversely, if the latter has property $\cP$ then so do its corners $p_sNp_s$ and non-degeneracy means that $\bigvee_s p_s=1\in N$, hence the claim.

  \item\label{item:pr:ppcomplid:II}{\bf: $\cP$ lifts from $N$ to $M$.} For it lifts from $N$ to $\cL_N(X_E)$ by \Cref{item:pr:ppcomplid:I}, and then descends to $M$ along $E_1$ by hypothesis.

  \item\label{item:pr:ppcomplid:III}{\bf: The strong negation $\cP'$ lifts from $N$ to $M$.} Consider the central decomposition $1=z_{M,\cP}+z_{M,\cP'}$ of \Cref{le:pp'}\Cref{item:le:pp':sneg}. We have \cite[\S 3.9]{bdh} a PP expectation
    \begin{equation*}
      zM
      \xrightarrow{\quad E_{z'}\quad}
      zN
      ,\quad
      z:=z_{M,\cP},
    \end{equation*}
    and the $\cP'$ property for $N$ (hence also for $zN$) entails it for $zM$ by \Cref{item:pr:ppcomplid:II} applied to $\cP'$ (completely ideal again by \Cref{le:pp'}\Cref{item:le:pp':sneg}). $z=z_{M,\cP'}$ thus vanishes, i.e. the target rephrased.

  \item\label{item:pr:ppcomplid:IV}{\bf: Conclusion.} Applying \Cref{item:pr:ppcomplid:II} and \Cref{item:pr:ppcomplid:III} to the expectations
    \begin{equation*}
      \begin{aligned}
        z_{N,\cP}Mz_{N,\cP} &\xrightarrow{\quad E\text{ restricted}\quad} z_{N,\cP}N
                              \quad\text{and}\\
        z_{N,\cP'}Mz_{N,\cP'} &\xrightarrow{\quad E\text{ restricted}\quad} z_{N,\cP'}N
      \end{aligned}
    \end{equation*}
    respectively, we obtain an orthogonal decomposition $1=z_{N,\cP}+z_{N,\cP'}$ into a $\cP$ and a $\cP'$ summand. Now, $z:=z_{N,\cP}$ and $z':=z_{N,\cP'}$ cannot have non-zero sub-projections
    \begin{equation*}
      p\le z
      \quad\text{and}\quad
      p'\le z'
      \quad\text{equivalent in }M,
    \end{equation*}
    for then $pMp\cong p'Mp'$ would be both $\cP$ and non-$\cP$. It follows \cite[Lemma V.1.7]{tak1} that $z$ and $z'$ are centrally orthogonal in $M$. Being themselves orthogonal and complementary, $z$ and $z'$ must also be central in $M$, so that $1=z+z'$ is the canonical central decomposition of $M$ attached to property $\cP$.
  \end{enumerate}
\end{proof}

\cite[\S 2.1]{pw_vna-act} (essentially) claims, and \cite[Propositions 2.1 and 2.2]{fk_fin-ind} imply \Cref{cor:typres} (give or take, minus the AFD property) for PP expectations.

\begin{corollary}\label{cor:typres}
  If there is a PP conditional expectation $M\xrightarrow{E}N$, then the maximal central projections
  \begin{enumerate}[(a)]
  \item\label{item:cor:typres:tau} $z_{\tau}$ of respective types $\tau\in\{I,\ II,\ III\}$
  \item\label{item:cor:typres:at} $z_a$ (atomic)
  \item\label{item:cor:typres:dif} $z_d$ (diffuse)
  \item\label{item:cor:typres:afd} and $z_{AFD}$ (AFD)
  \end{enumerate}
  for $M$ and $N$ coincide. 
\end{corollary}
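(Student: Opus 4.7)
The plan is to reduce the corollary to six successive applications of \Cref{th:ppcomplid}, one per property. \Cref{exs:compidealprops} already certifies that each listed property---type $\tau$ for $\tau\in\{I,\ II,\ III\}$, atomicity, diffuseness and AFD-ness---is completely ideal, so the only outstanding verification is the descent hypothesis of \Cref{th:ppcomplid}: whenever $A\xrightarrow{F}B$ is PP and $A$ has $\cP$, so does $B$.

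For AFD the descent needs no PP hypothesis at all. By the injectivity characterization recalled in \Cref{exs:compidealprops}\Cref{item:exs:compidealprops:afd}, an AFD (equivalently, injective) $A$ embedded in any $\cL(H)$ admits a norm-one projection $\cL(H)\twoheadrightarrow A$; post-composition with $F$ produces a norm-one projection $\cL(H)\twoheadrightarrow B$, witnessing the injectivity and hence AFD-ness of $B$.

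For the three type classes and for atomicity, descent along PP expectations is standard Pimsner--Popa theory, explicitly recorded in \cite[\S 2.1]{pw_vna-act} and \cite[Propositions 2.1 and 2.2]{fk_fin-ind} as flagged in the paragraph immediately preceding the corollary statement. Diffuseness then descends automatically as the strong negation of atomicity via \Cref{le:pp'}\Cref{item:le:pp':sneg}; equivalently, step III of the proof of \Cref{th:ppcomplid} handles the strong-negation side of any completely ideal property for free.

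The sole conceptual hurdle is accepting the descent of types and atomicity from those cited references; everything else is bookkeeping. Once that is in hand, applying \Cref{th:ppcomplid} in turn to $\cP\in\{$type $I$, type $II$, type $III$, atomic, diffuse, AFD$\}$ yields $z_{M,\cP}=z_{N,\cP}$ for each $\cP$, which is the full assertion of \Cref{cor:typres}.
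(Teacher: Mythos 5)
Your overall architecture---feed each property into \Cref{th:ppcomplid} after checking it is completely ideal and descends---is the paper's, and two of your cases match the paper exactly: AFD (descent via injectivity, no PP hypothesis needed) and diffuseness (free of charge as the strong negation of atomicity, since the conclusion $z_{M,\cP}=z_{N,\cP}$ of the theorem immediately yields $z_{M,\cP'}=z_{N,\cP'}$ by \Cref{le:pp'}\Cref{item:le:pp':sneg}).

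The gap is in how you certify the descent hypothesis for types $II$ and $III$. You call it ``standard Pimsner--Popa theory'' and point to \cite[\S 2.1]{pw_vna-act} and \cite[Propositions 2.1 and 2.2]{fk_fin-ind}---but the paragraph you are quoting says precisely that those references already (essentially) contain \Cref{cor:typres}, so leaning on them for the descent of types $II$ and $III$ along PP expectations reduces the corollary to itself and defeats the purpose of deriving it from \Cref{th:ppcomplid}. Nor is that descent an ``easy'' fact on the same footing as the others: type $I$, atomicity and AFD-ness all descend along \emph{arbitrary} normal conditional expectations (\cite[Theorems III.2.5.26 and IV.2.2.3, Proposition IV.2.1.4]{blk}), whereas descent of type $III$ along a PP expectation is essentially the content being proved. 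The missing ingredient is the paper's use of \emph{semifiniteness}: it too descends along arbitrary expectations \cite[Corollary III.2.5.25(i)]{blk}, it is completely ideal, and type $III$ is its strong negation, so applying \Cref{th:ppcomplid} to semifiniteness hands you $z_{III,M}=z_{III,N}$ for free; the type $II$ projection is then settled by $z_{II}=z_{\mathrm{semifinite}}-z_{I}$ from the central decomposition $1=z_I+z_{II}+z_{III}$ of \cite[Theorem V.1.19]{tak1}. With semifiniteness and type $I$ substituted for types $II$ and $III$ as the inputs to the theorem, your argument closes up without any appeal to the references you cite.
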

\begin{proof}
  Given that diffuseness and type $III$ are the respective strong negations of atomicity and {\it semifiniteness} \cite[\S\S III.1.4.2 and III.1.4.7]{blk}, the claims are all direct consequences of \Cref{th:ppcomplid} once we recall that
  \begin{itemize}[wide]
  \item semifiniteness \cite[Corollary III.2.5.25(i)]{blk}
  \item being of type $I$ \cite[Theorem III.2.5.26]{blk} (or {\it discrete} \cite[\S III.1.4.4]{blk})
  \item atomicity \cite[Theorem IV.2.2.3]{blk}
  \item and AFD-ness (obviously by \cite[Proposition IV.2.1.4]{blk} and the equivalence \cite[Theorem 2 and Corollary 5]{ell_afd-2} AFD $\Leftrightarrow$ injective)
  \end{itemize}
  all descend along {\it arbitrary} (not just PP) conditional expectations.
\end{proof}

\cite[Th\'eor\`eme 2.2(1)]{zbMATH00059991} shows that finiteness lifts along strongly PP expectations. \cite[Property 1.1.2(iii)]{popa_clsf-subf} makes the parallel claim under only the weak PP assumption, while \cite[\S 2.1]{pw} amplifies that claim with the assertion that the finite / properly infinite central decompositions of $M$ and $N$ coincide in the presence of a weakly PP expectation. The weaker version assuming {\it strong} PP-ness (strengthening \cite[Th\'eor\`eme 2.2(1)]{zbMATH00059991}) can be recovered by specializing \Cref{th:strongpp} below to $\cP$ = finiteness (per \Cref{exs:compidealprops}\Cref{item:exs:compidealprops:fin}). 

\begin{theorem}\label{th:strongpp}
  Let $M\xrightarrow{E}N$ be a strongly PP conditional expectation and $\cP$ a $Z$-completely ideal property that descends along such expectations. The central decompositions
  \begin{equation*}
    z_{N,\cP} + z_{N,\cP_Z'}
    =
    1
    =
    z_{M,\cP} + z_{M,\cP_Z'}
  \end{equation*}
  of \Cref{le:pp'}\Cref{item:le:pp':z} coincide.   
\end{theorem}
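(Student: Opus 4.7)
The plan is to follow the proof of \Cref{th:ppcomplid}, adapted to the strongly PP / $Z$-completely ideal setting. In Step~(I) of that argument the finite orthonormal basis $\{x_1,\ldots,x_s\}$ afforded by strong PP-ness collapses the structural description to $\cL_N(X_E)\cong pM_s(N)p$ with $p=\diag(p_1,\ldots,p_s)$, $p_i=\braket{x_i|x_i}$; only finite suprema occur ($1_{M_s(N)}=\sum_i(1\otimes e_{ii})$ on one side, $\bigvee_ip_i=1_N$ from non-degeneracy on the other), so bare ideal closure delivers the equivalence \textquoteleft $N$ has $\cP$ iff $\cL_N(X_E)$ has $\cP$\textquoteright. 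Step~(II) then lifts $\cP$ from $N$ to $M$ unchanged, composing Step~(I) with descent of $\cP$ along the strongly PP companion expectation $E_1:\cL_N(X_E)\to M$.

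The substantive novelty is Step~(III), lifting $\cP_Z'$ from $N$ to $M$. The original proof re-ran Step~(II) with $\cP'$ in place of $\cP$ courtesy of \Cref{le:pp'}\Cref{item:le:pp':sneg}; that shortcut is blocked because $\cP_Z'$ need not even be ideal, as witnessed by proper infinitude (\Cref{exs:compidealprops}\Cref{item:exs:compidealprops:propinfin}). My plan is to argue contrapositively: given a nonzero central $\cP$-projection $w\in Z(M)$, the $*$-homomorphism $\phi:N\to M$, $n\mapsto wn$, has ultraweakly closed kernel of the form $(1-z)N$ for a unique $z\in Z(N)$, and the expansion $w=wz+w(1-z)=wz$ (using $(1-z)\in\ker\phi$) shows $w\le z$ in $M$, whence $w\in Z(zMz)$. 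The corner expectation $E|_{zMz}:zMz\to zN$ is strongly PP (strong PP is preserved under cutting by central projections of $N$), and $\phi$ induces a $W^*$-isomorphism $wN\cong zN$. The task is then to descend $\cP$-ness of $wMw\subseteq zMz$ along this expectation to exhibit $z$ as a nonzero central $\cP$-projection of $N$, contradicting $\cP_Z'$ on $N$. Since $\{wx_i\}$ generates $wM$ as a right $wN$-module (apply $w$ to the expansion $m=\sum_ix_iE(x_i^*m)$), by \Cref{re:fgenough} any PP expectation $wM\to wN$ is automatically strongly PP; the main obstacle is thus constructing that PP expectation. The naive candidates $wm\mapsto wE(m)$ (not well-defined) and $wm\mapsto wE(wm)$ (not idempotent) both fail, and the fix should use $w\le z$ together with the central element $E(w)\in Z(N)$ (central by $N$-bimodularity of $E$ and the centrality of $w$ in $M$) to twist $E$ into an honest expectation.

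Step~(IV) transplants in structure from \Cref{th:ppcomplid}: Steps~(II) and~(III) applied to the corner expectations cut by $z_{N,\cP}$ and $z_{N,\cP_Z'}$ decompose $M$ into orthogonal $\cP$ and $\cP_Z'$ summands, and the equivalent-subprojections obstruction closed by \cite[Lemma~V.1.7]{tak1} forces $z_{N,\cP}$ and $z_{N,\cP_Z'}$—centrally orthogonal in $N$ by definition—to be centrally orthogonal in $M$ as well, hence central by complementarity and identifiable with the canonical $z_{M,\cP}$ and $z_{M,\cP_Z'}$ by maximality. The subtlety absent in the completely ideal setting is that a $\cP$-subprojection $p'\le z_{N,\cP_Z'}$ of $M$ does not automatically contradict $\cP_Z'$-ness of $z_{N,\cP_Z'}Mz_{N,\cP_Z'}$ (which only forbids \emph{central} $\cP$-projections); the contradiction is drawn instead by re-invoking the contrapositive argument of Step~(III) to upgrade $p'$ to a central $\cP$-projection of an appropriate corner, using once more the $Z$-completely ideal structure of $\cP$.
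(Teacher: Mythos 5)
Your steps (I)--(III) essentially track the paper's proof. Step (III) in particular is the right idea: a non-zero central $\cP$-projection $w\in Z(M)$ yields, via the corner expectation induced by $E$ on $wMw$, a non-zero central $\cP$-projection of $N$; the induced expectation you are struggling to build is exactly what the paper imports from \cite[\S 3.9]{bdh} (since $w\in N'\cap M$ one renormalizes $E(w\cdot w)$ by the inverse of $E(w)\in Z(N)$ on its support), so you have isolated the right ingredients but left a citation-shaped hole. Note, however, that you drop the $\cP_Z'$ half of step (I) --- the equivalence of $\cP_Z'$ for $N$ and $\cL_N(X_E)$ --- which the paper states explicitly and which its concluding step actually requires.

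The genuine gap is step (IV). The paper does \emph{not} re-run the equivalent-subprojections argument of \Cref{th:ppcomplid}\Cref{item:pr:ppcomplid:IV} here; it instead cycles around the tower $N\subseteq M\subseteq\cL_N(X_E)$: after cutting to the $\cP_Z'$ central summand one may assume $M$ is $\cP_Z'$, then $\cP_Z'$ lifts from $M$ to $\cL_N(X_E)$ by step (III) applied to the companion expectation $E_1$, and descends back to $N$ by the $\cP_Z'$ half of step (I), forcing $z_{N,\cP_Z'}=1=z_{M,\cP_Z'}$ there. Your route cannot be repaired as described: you correctly observe that a $\cP$-subprojection $p'\le z_{N,\cP_Z'}$ equivalent in $M$ to some $p\le z_{N,\cP}$ yields no contradiction with $\cP_Z'$-ness of the corner, but the proposed fix --- ``upgrading'' $p'$ to a central $\cP$-projection of an appropriate corner --- is impossible in general for merely $Z$-completely ideal $\cP$. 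Take $\cP=$ finiteness: $\cL(H)$ is properly infinite (i.e. $\cP_Z'$) yet teems with finite projections, and none of them dominates a non-zero central finite projection; the central support of a $\cP$-projection need not be $\cP$ precisely because closure under arbitrary, non-centrally-orthogonal suprema is what $Z$-complete ideality gives up. So no contradiction can be extracted from $p'$ alone, and the conclusion requires the paper's lift-and-descend argument (or some genuine substitute) rather than a patched equivalence obstruction.
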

\begin{proof}
  The proof plan follows that of \Cref{th:ppcomplid}, with appropriate modifications.

  \begin{enumerate}[(I), wide=0pt]
  \item\label{item:th:strongpp:I}{\bf: $\cP$ and its $Z$-strong negation $\cP_Z'$ are equivalent for $N$ and $\cL_N(X_E)$.} Precisely as in part \Cref{item:pr:ppcomplid:I} of \Cref{th:ppcomplid}, with the $S$ of \Cref{eq:xopluss} finite this time around. 

    Sections \Cref{item:pr:ppcomplid:II} and \Cref{item:pr:ppcomplid:III} of the proof of \Cref{th:ppcomplid} now also apply to yield their respective analogues:
    
  \item\label{item:th:strongpp:II}{\bf: $\cP$ lifts from $N$ to $M$}, and 

  \item\label{item:th:strongpp:III}{\bf: The $Z$-strong negation $\cP_Z'$ lifts from $N$ to $M$.}

  \item\label{item:th:strongpp:IV}{\bf: Conclusion.} We now have a decomposition $1=z_{N,\cP}+z_{N,\cP_Z'}$ in $M$, and we will be done as soon as we argue that $z_{N,\cP_Z'}\in M$ is central.

    Restricting attention to the $\cP_Z'$ von Neumann algebra $z_{M,\cP'}M$ expecting onto its $W^*$-subalgebra $z_{M,\cP'}N$ (with the strong PP expectation induced \cite[\S 3.9]{bdh} by $E$), we can assume $M$ is $\cP_Z'$ to begin with. Now, $\cP_Z'$ lifts to $\cL_N(X_E)$ by \Cref{item:th:strongpp:III}, and then descends down to $N$ by \Cref{item:th:strongpp:I}. It follows that $1=z_{M,\cP_Z'}$ cannot be {\it strictly} larger than $z_{N,\cP_Z'}$, and we are done. 
  \end{enumerate}
\end{proof}

\pf{th:finqg-sametype}
\begin{th:finqg-sametype}
  Consider the {\it spectral projections}
  \begin{equation*}
    M\xrightarrow{\quad E_{\alpha}\quad} M_{\alpha}
    ,\quad
    \alpha\in \Irr \bG
  \end{equation*}
  onto the respective {\it isotypic subspaces} (\cite[Theorem 1.5]{podl_symm} or \cite[\S 2.2]{dcy}). In particular, we have a conditional expectation
  \begin{equation*}
    M\xrightarrow{\quad E:=E_{\cat{triv}}\quad} M_{\cat{triv}}=M^{\bG}.
  \end{equation*}
  The conclusion is a direct application of \Cref{cor:typres} for all but finiteness and \cite[Proposition 2.1]{fk_fin-ind} for the latter, as soon as we verify that $E$ is PP. Now, \cite[Lemma 2.5]{dcy} proves a ``local'' version of the desired result, valid for arbitrary compact quantum $\bG$: for every $\alpha\in\Irr \bG$ there is some $\lambda_{\alpha}>0$ with
  \begin{equation}\label{eq:dcyeea}
    E(x^*x)\ge \lambda_{\alpha} E_{\alpha}(x)^*E_{\alpha}(x)\ \text{matricially}
    ,\quad \forall x\in M.
  \end{equation}
  Assuming (as we are) that $\bG$ is finite and hence $\left|\Irr(\bG)\right|<\infty$, for every $x\in M$ we have  
  \begin{equation*}
    \begin{aligned}
      x^*x &= \left(\sum_{\alpha} E_{\alpha}(x)^*\right)
             \left(\sum_{\alpha} E_{\alpha}(x)\right)\\
           &\le |\Irr(\bG)|^2 \sum_{\alpha} E_{\alpha}(x)^* E_{\alpha}(x)
             \quad\text{by {\it Cauchy Schwartz} \cite[\S I.1.1.2]{blk}}\\
           &\le |\Irr(\bG)|^2\left(\max_{\alpha}\frac 1{\lambda_{\alpha}}\right) E(x^*x)
             \quad\text{by \Cref{eq:dcyeea}},
    \end{aligned}
  \end{equation*}
  all valid matricially. \Cref{eq:ppexp} thus holds for
  \begin{equation*}
    \lambda := \frac{1}{|\Irr\bG|^2}\min_{\alpha}\lambda_{\alpha},
  \end{equation*}
  and we are done.   
\end{th:finqg-sametype}

\addcontentsline{toc}{section}{References}

\def\polhk#1{\setbox0=\hbox{#1}{\ooalign{\hidewidth
  \lower1.5ex\hbox{`}\hidewidth\crcr\unhbox0}}}

\Addresses

\end{document}